\theoremstyle{plain}
\newtheorem{theorem}{Theorem}
\newtheorem{proposition}{Proposition}
\newtheorem{definition}{Definition}
\newtheorem{corollary}{Corollary}
\theoremstyle{remark}
\newtheorem{remark}{Remark}
\numberwithin{equation}{section}
\newcommand{\abs}[1]{|#1|}
\begin{document}

\title[]{A Spectral Dichotomy Version of the Nonautonomous Markus--Yamabe Conjecture}
\author[]{\'Alvaro Casta\~neda}
\author[]{Gonzalo Robledo}
\address{Departamento de Matem\'aticas, Facultad de Ciencias, Universidad de
  Chile, Casilla 653, Santiago, Chile}
\thanks{This research has been partially supported by MATHAMSUD program (16-MATH-04 STADE)}
\subjclass{34D09, 37C60, 39A06}
\keywords{Linear Nonautonomous Differential Equations, Sacker $\&$ Sell's Spectrum, Markus--Yamabe Problem.}
\date{\today}
\begin{abstract}
In this article we introduce a nonautonomous version of the Markus--Yamabe conjecture from an exponential dichotomy spectrum point of view. We prove the validity of this conjecture for the scalar and triangular case. Additionally we show that the origin is a global attractor for an autonomous system by using nonautonomous dynamical systems tools.
\end{abstract}

\maketitle

\section{Introduction}

One of the central problems on autonomous and nonautonomous dynamical systems, is to determine
conditions under which certain sets $\Omega$ are ``attractors" for
some dynamics, that is, when the orbits of a set of points converge in the
future to $\Omega$. In the  autonomous continuous-time case, that is, flows
associated to vector fields $F\colon \mathbb{R}^{n}\to \mathbb{R}^{n}$, an infinitesimal condition that ensures
that an equilibrium point $x_{0}$ (\textit{i.e.}, $F(x_{0})=0$) is a local asymptotic attractor is that the Jacobian matrix at $x_0,$ namely,
$JF(x_{0})$ is Hurwitz, \textit{i.e.}, the real part of its eigenvalues is negative.

\subsection{Markus--Yamabe conjecture}

Motivated for this simple observation over the eigenvalues in the autonomous context in \cite{MY}, L. Markus $\&$
H. Yamabe establish their well known global stability conjecture.

\medskip

\noindent {\bf{The Markus--Yamabe Conjecture (MYC).}} Let $ F: \mathbb{R}^n
\to \mathbb{R}^n$ be a $C^1-$ vector field with $ F(0) = 0 $ such that $JF(x)$ is Hurwitz for any $ x
\in \mathbb{R}^n,$ then the origin is a global attractor of
the system \begin{equation}\label{nonlin}\dot{x} = F(x).\end{equation}

Let us recall that the vector fields satisfying the hypothesis of {\textbf{MYC}} are called Hurwitz vector fields. It is known that the MYC is true
when $ n \leq 2 $   and false when $ n \geq 3$ (see \cite{CEGMH} for a counterexample). The proofs in the planar context, both the polynomial case (G. Meisters and C. Olech in \cite{MO}) as the $ C^1-$ case (R. Fe{\ss}ler in \cite{F}, A.A. Glutsyuk in \cite{Glu} and C. Guti\'errez in \cite{Gu}) are based on a remarkable result of C. Olech \cite{O}, which proved that MYC is equivalent to the injectivity of the map $F$ when $n=2$.

Contrarily to the autonomous case, the eigenvalues analysis does not always allow any conclusion over the stability of the solutions in the nonautonomous framework. Indeed, L. Markus and H. Yamabe \cite{MY} constructed the counterintuitive example
\begin{displaymath}
\dot{x}=A(t)x \quad \textnormal{with} \quad
A(t)=\left(\begin{array}{cc}
-1+\frac{3}{2}\cos^{2}(t) & -1+\frac{3}{2}\cos(t)\sin(t)\\\\
-1-\frac{3}{2}\cos(t)\sin(t) & -1+\frac{3}{2}\sin^{2}(t)
\end{array}\right),
\end{displaymath}
which has eigenvalues with negative real part, namely the eigenvalues are $-1/4 \pm 1/4 \sqrt{7}i.$ Nevertheless, the system has  fundamental matrix

\begin{equation}
\label{Markus}
\Phi(t) = \left (
\begin{array}{rcl}
e^{\frac{1}{2} t} \cos t  & & e^{-t} \sin t\\
-e^{\frac{1}{2} t} \sin t &  & e^{-t} \cos t
\end{array}
\right ),
\end{equation}
which shows that there exists initial conditions $x(0),$ arbitrarily  close to the origin, for which the solutions escapes to infinity although we have the negativeness of the eigenvalues.

\subsection{A generalization to the nonautonomous context}
We have seen that an eigenvalues--based approach has several shortcomings and is not an adequate tool to cope with stability issues in the nonautonomous framework. On the other hand, while in the autonomous context, the properties of asymptotical stability and hyperbolicity of an equilibrium are described in terms of the spectrum of the eigenvalues associated to the Jacobian matrix, in the nonautonomous framework several spectral theories are available (\emph{e.g}, Lyapunov, Bohl and Sacker $\&$ Sell) and we refer the reader to \cite{Dieci,Doan} for a deep discussion.

In order to generalize the MYC to the nonautonomous case, we will consider the property of exponential dichotomy which can be seen as the generalization of the hyperbolicity that allow the construction of a nonautonomous spectral theory namely, the Sacker $\&$ Sell's or exponential dichotomy spectrum, which is compatible with the uniform asymptotical stability.

\begin{definition}\cite{K}
\label{UAS}
Consider the nonlinear system
\begin{equation}
    \label{nolin1}
    \dot{x} = g(t,x)
\end{equation}
with $g(t,0) = 0$ for any $t \geq 0.$ The equilibrium point $x = 0$ is uniformly asymptotically stable  if there exist a $\mathcal{KL}$ function $\beta \colon \mathbb{R}^+ \times \mathbb{R}^+ \to \mathbb{R}^+$ and $c > 0$ with
$$|x(t)| \leq \beta(|x(t_0)|, t-t_0) \quad \forall t \geq t_0 \geq 0, \quad \textnormal{for all} \quad |x(t_0)| < c,$$
where $c$ is independent of $t_0$. We recall that $\beta\in \mathcal{KL}$ if $s \mapsto \beta(s,r)$ is strictly increasing with $\beta(0,r) = 0,$  for any fixed $r\geq 0$ and $r \mapsto \beta(s,r)$ is decreasing and tends to zero when $r \to \infty$ for any fixed $s$.
\end{definition}

\begin{definition}\cite{Coppel}
\label{DE}
The linear system
\begin{equation}
\label{lineal}
\dot{x} = A(t)x,
\end{equation}
where $t \mapsto A(t)$ is a locally integrable map, has an exponential dichotomy property on $J \subset \mathbb{R}$ if there exists a projection $P^{2}=P$
and constants $K\geq 1$, $\alpha>0$,  such that its fundamental matrix $\Phi(t)$ verifies:
\begin{equation}
\label{ED}
\left\{\begin{array}{rcl}
||\Phi(t)P\Phi^{-1}(s)||&\leq & Ke^{-\alpha(t-s)} \quad \textnormal{for any} \quad t\geq s, \quad t,s \in J,\\
||\Phi(t)(I-P)\Phi^{-1}(s)|| &\leq & Ke^{-\alpha(s-t)} \quad \textnormal{for any} \quad s \geq t, \quad t,s \in J.\\
\end{array}\right.
\end{equation}
\end{definition}



\begin{definition}\cite{Kloeden, SS}
The exponential dichotomy spectrum associated to (\ref{lineal}) is the set
\label{spectrum}
\begin{displaymath}
\Sigma_{J}(A)=\left\{\gamma \in \mathbb{R}\colon
\dot{x}=[A(t)-\gamma I]x \quad \textnormal{has not an exponential dichotomy on} \, J \right\}.
\end{displaymath}
\end{definition}

It is important to note that (\ref{lineal}) has an exponential dichotomy on $J$ if and only if $0\notin \Sigma_{J}(A)$, which prompt us to consider the exponential dichotomy as a generalization of the hyperbolicity property to the nonautonomous case.

Moreover, it is well known (see e.g. \cite[Th. 5.12]{Kloeden}) that if $||A(t)||$ is bounded on $J$  then
$$\Sigma_{J}(A)=[a_{1},b_{1}]\cup \cdots \cup [a_{\ell},b_{\ell}] \quad \textnormal{where} \quad \ell\leq n,$$
where $[a_i,b_i]$ are known as spectral intervals. It is interesting to point out that $\Sigma_{J}$ can be seen as a generalization of the eigenvalues spectrum since the spectral intervals play the role of the real parts of the eigenvalues.

In this work, we will be mainly focused in the particular case $J=[0,+\infty)$ and we will use the notation $\Sigma^{+}(A)$ for simplicity.

\begin{remark}
\label{observacion}
 There exists a strong relation between the three concepts above. Indeed,
 \begin{itemize}
     \item [(i)] The linear system (\ref{lineal}) is uniformly asymptotically stable (see e.g. \cite[Theorem 4.11]{K}) if and only if there exist constants $K \geq 1$ and $\alpha > 0$ such that
     $$|\Phi(t,t_0)| \leq K e^{-\alpha(t-t_0)} \quad \textnormal{for any} \quad t \geq t_0 \geq 0,$$ that is (\ref{lineal}) is uniformly asymptotically stable if and only if has exponential dichotomy on $\mathbb{R}^+$ with projector $P = I.$

     \item [(ii)] A trivial consequence of Gronwall's lemma is that the linear system (\ref{lineal}) is uniformly asymptotically stable if and only if $\Sigma^{+}(A) \subset (-\infty, 0).$ This fact mimics in some sense the notion of the Hurwitz matrix of the autonomous context.

     \item[(iii)] For any $\gamma \in (b_i, a_{i+1})$ the system $\dot{x} = [A(t)-\gamma I]x$ has an exponential dichotomy on $J$ with projector $P_{\gamma}$ whose rank is constant in this interval. We refer the reader to \cite[Lemma 5.11]{Kloeden} for details.
 \end{itemize}
\end{remark}

The roughness properties of the exponential dichotomy, namely, the preservation of the exponential dichotomy of \eqref{lineal} by a linear additive perturbation
\begin{equation}\label{robustez}
\dot{x} = [A(t) + B(t)]x
\end{equation}
such that $B(t)$ is small enough. The following results are recalled in order to make this article self contained.

\begin{proposition}\cite[p.34]{Coppel}
\label{RCoppel}
Assume that \eqref{lineal} has an exponential dichotomy on $[0, +\infty)$ with projector $P.$ If there exists a bounded continuous matrix $B(t)$ such that
$$
\displaystyle \sup_{t \in \mathbb{R}^+}||B(t)|| < \alpha / 4 K^2
$$ then the perturbed system \eqref{robustez}
also has an exponential dichotomy with projector $Q$ having the same null space has the projection $P.$
\end{proposition}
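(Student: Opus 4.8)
The plan is to realise the Green's function of the perturbed system \eqref{robustez} as the solution of a linear integral equation whose inhomogeneous part is the Green's function of \eqref{lineal}, and then to read off the new projector across the diagonal. Writing $\Phi$ for a fundamental matrix of \eqref{lineal} normalised by $\Phi(0)=I$, I would first record the Green's function
\[
G(t,s)=\begin{cases}\Phi(t)P\Phi^{-1}(s), & t\ge s,\\ -\Phi(t)(I-P)\Phi^{-1}(s), & t<s,\end{cases}
\]
which by \eqref{ED} satisfies $\norm{G(t,s)}\le Ke^{-\alpha|t-s|}$ for all $t,s\ge 0$. Denoting by $T(t,s)$ the transition matrix of \eqref{robustez}, the variation of constants formula suggests that the perturbed Green's function $H(t,s)$ ought to solve
\[
H(t,s)=G(t,s)+\int_{0}^{+\infty}G(t,r)B(r)H(r,s)\,dr .
\]

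I would solve this by a fixed point argument in a Banach space of continuous kernels equipped with an exponentially weighted sup-norm. The decisive estimate is
\[
\int_{0}^{+\infty}\norm{G(t,r)}\,\norm{B(r)}\,dr\le K\Big(\sup_{r}\norm{B(r)}\Big)\int_{-\infty}^{+\infty}e^{-\alpha|t-r|}\,dr\le \frac{2K}{\alpha}\sup_{r}\norm{B(r)},
\]
so that under the hypothesis $\sup_{r}\norm{B(r)}<\alpha/4K^{2}$ the associated integral operator has norm at most $\tfrac{1}{2K}\le\tfrac12<1$. The contraction principle then yields a unique bounded $H$; crucially, the same smallness margin leaves room to rerun the estimate against a weight $e^{\beta|t-s|}$ with $0<\beta<\alpha$, which is the device that transfers an exponential bound from $G$ to $H$, giving $\norm{H(t,s)}\le\tilde K e^{-\tilde\alpha|t-s|}$ with new constants $\tilde K\ge1$, $\tilde\alpha>0$ depending only on $K,\alpha$ and on the bound for $B$.

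It remains to identify the projector. Differentiating the integral equation shows that for fixed $s$ the map $t\mapsto H(t,s)$ solves \eqref{robustez} off the diagonal with a unit jump at $t=s$, so setting $Q:=\lim_{t\to 0^{+}}H(t,0)$ produces a candidate for which $H(t,s)=T(t,0)\,Q\,T(0,s)$ when $t\ge s$ and $H(t,s)=-T(t,0)(I-Q)T(0,s)$ when $t<s$; the bound on $H$ is then exactly the pair of dichotomy estimates \eqref{ED} for \eqref{robustez}. To verify that $Q$ is idempotent with $\ker Q=\ker P$, I would build the new stable solutions directly: for $\xi\in\operatorname{Im}P$ the solution of \eqref{robustez} determined by
\[
x(t)=\Phi(t)\xi+\int_{0}^{t}\Phi(t)P\Phi^{-1}(r)B(r)x(r)\,dr-\int_{t}^{+\infty}\Phi(t)(I-P)\Phi^{-1}(r)B(r)x(r)\,dr
\]
is bounded and exponentially decaying, and at $t=0$ it satisfies $x(0)\in\xi+\ker P$, whence $Px(0)=\xi$. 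Thus $\xi\mapsto x(0)$ maps $\operatorname{Im}P$ isomorphically onto a subspace complementary to the unchanged unstable subspace $\ker P$, and defining $Q$ as the projection onto that subspace along $\ker P$ forces $\ker Q=\ker P$ and $\operatorname{rank}Q=\operatorname{rank}P$.

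The two integral estimates are routine; the genuinely delicate point is the second step, namely upgrading mere boundedness of $H$ to a strictly positive decay rate $\tilde\alpha$, since the naive Neumann series only controls $H$ in the unweighted sup-norm. The hard part will be organising the weighted-norm estimate so that the single budget $\alpha/4K^{2}$ simultaneously secures contraction and a positive exponent, and then checking that the $Q$ extracted from the kernel coincides with the projection along $\ker P$ furnished by the solution construction, i.e. that both routes describe the same exponential dichotomy.
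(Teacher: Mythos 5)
The paper offers no proof of Proposition \ref{RCoppel} at all --- it is recalled verbatim from Coppel's monograph (p.~34) precisely so the article can be self-contained --- so the only meaningful comparison is with Coppel's classical argument, and your sketch is essentially a reconstruction of it: the Lyapunov--Perron integral equation, the contraction in a weighted sup-norm, and the construction of the new stable subspace as the image of $\operatorname{Im}P$ under $\xi\mapsto x(0)$ are exactly his devices. Your decisive estimates are right: with $\delta:=\sup_{t}\norm{B(t)}<\alpha/4K^{2}$ the convolution bound gives operator norm at most $2K\delta/\alpha<1/(2K)\le 1/2$, and rerunning against the weight $e^{\beta|t-s|}$ contracts for any $0<\beta<\alpha-2K\delta$ (note $2K\delta<\alpha/(2K)\le\alpha/2$, so such $\beta$ exist), recovering Coppel's exponent $\hat{\alpha}=\alpha-2K\delta$. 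One caution about your kernel route: on the half-line the equation $H=G+\int_{0}^{\infty}G(t,r)B(r)H(r,s)\,dr$ does \emph{not} characterize the perturbed Green kernel, because the difference of two candidate kernels is a bounded solution $\Phi(t)c$ of \eqref{lineal} and on $\mathbb{R}^{+}$ these are abundant (all $c\in\operatorname{Im}P$); consequently idempotency and invariance of $Q(s):=H(s^{+},s)$ do not fall out of uniqueness of your fixed point, and your candidate $Q=\lim_{t\to 0^{+}}H(t,0)$ is not a priori a projection. This is precisely why the solution-based construction you give in the last step is not an optional cross-check but the load-bearing part of the proof: $Px(0)=\xi$ gives injectivity of $\xi\mapsto x(0)$, the dimension count gives $\operatorname{Im}Q\oplus\ker P=\mathbb{R}^{n}$, and defining $Q$ as the projection along $\ker P$ is what makes the conclusion $\ker Q=\ker P$ available at all --- for a dichotomy on $[0,+\infty)$ only the range of the projector is canonical (initial values of bounded solutions), while the null space may be any complement, and the statement's normalization reflects Coppel's choice. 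The remaining gluing you flagged --- verifying that the kernel assembled from this $Q$ satisfies your integral equation in the weighted space, which then delivers the backward estimate $\norm{T(t)(I-Q)T^{-1}(s)}\le\hat{K}e^{-\hat{\alpha}(s-t)}$ for $s\ge t$ --- is a standard variation-of-constants computation, so there is no fatal gap, only the honest bookkeeping you already identified.
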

This result has been improved in several directions as in \cite{Coppel, Naulin, Palmer84, Wiggins}.  We will be focused in the following one

\begin{proposition}\cite[Corollary 3.1]{Wiggins}
\label{RWiggins}
Suppose \eqref{lineal} has an exponential dichotomy on $[T, +\infty)$ with projector $P.$ Suppose
$$
\limsup_{t \to \infty}||B(t)|| < \alpha / 2K,
$$
then \eqref{robustez} has an exponential dichotomy on $\mathbb{R}^+$ with projector $Q$ similar to $P.$
\end{proposition}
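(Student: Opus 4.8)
The plan is to argue in three stages: reduce the $\limsup$ hypothesis to an eventual uniform bound, run a sharp roughness construction on a right half-line, and then transport the dichotomy back to all of $\mathbb{R}^+$. First, since $\limsup_{t\to\infty}\|B(t)\| < \alpha/2K$, I fix a constant $\rho$ and a time $T'\ge T$ with
\[
\limsup_{t\to\infty}\|B(t)\| < \rho < \frac{\alpha}{2K},
\qquad \|B(t)\|\le \rho \quad\textnormal{for all } t\ge T'.
\]
On $[T',+\infty)$ the perturbation is therefore genuinely uniformly small, and the problem is reduced to the classical roughness setting, but with the sharper threshold $\alpha/2K$ in place of Coppel's $\alpha/4K^2$.

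Second (the heart of the argument), on $[T',+\infty)$ I would build the perturbed dichotomy through the Green's function of \eqref{lineal},
\[
G(t,s)=
\begin{cases}
\Phi(t)P\Phi^{-1}(s), & t\ge s,\\
-\Phi(t)(I-P)\Phi^{-1}(s), & t<s.
\end{cases}
\]
The dichotomy estimates \eqref{ED} yield the crucial uniform bound $\int_{T'}^{\infty}\|G(t,s)\|\,ds \le 2K/\alpha$. Any bounded solution of \eqref{robustez} lying in the perturbed stable manifold must satisfy the fixed-point equation $x(t)=\Phi(t)P\xi+\int_{T'}^{\infty}G(t,s)B(s)x(s)\,ds$, and the associated integral operator is a contraction on the space of bounded continuous functions with constant at most $(2K/\alpha)\rho<1$ — which is exactly what the hypothesis $\rho<\alpha/2K$ provides. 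Solving this for each $\xi$ in the range of $P$ exhibits the perturbed stable subspace as a graph over the range of $P$, and a symmetric backward integration handles the complementary subspace; reading off the decay rates from the contraction then gives an exponential dichotomy on $[T',+\infty)$ with projector $Q'$ satisfying $\rank Q'=\rank P$.

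Third, to pass from $[T',+\infty)$ to $[0,+\infty)$, I would use that on the compact interval $[0,T']$ the transition matrix of \eqref{robustez} and its inverse are bounded, since $A(\cdot)$ is locally integrable and $B(\cdot)$ is bounded. Transporting the projector by the perturbed evolution operator and absorbing the finite-interval bounds into the dichotomy constant extends the dichotomy to all of $\mathbb{R}^+$ without altering the rank. Finally, since $Q$ and $P$ are idempotent matrices of equal rank, they are similar, which gives the asserted conclusion.

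The step I expect to be the main obstacle is the second one: one must verify carefully that the contraction produces a subspace of exactly the right dimension — that the perturbed stable set is genuinely a graph over the full range of $P$ and not a proper subspace — and that the exponential rates extracted from the fixed point are uniform on the whole half-line. This is precisely where the constant $\alpha/2K$ has to be tracked sharply through the $L^1$-bound on $G$, in contrast with the coarser estimates that only yield Coppel's $\alpha/4K^2$ in Proposition~\ref{RCoppel}.
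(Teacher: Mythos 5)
You should first note that the paper itself contains no proof of Proposition \ref{RWiggins}: it is quoted verbatim from \cite{Wiggins} (Corollary 3.1) as a known roughness result, so your attempt can only be measured against the proof in that reference --- whose architecture (reduce the $\limsup$ hypothesis to a uniform bound on $[T',+\infty)$, run a Green's-function fixed point, extend over the compact interval $[0,T']$, conclude similarity of $Q$ and $P$ from equality of ranks) your outline does mirror. Stages one and three of your plan are sound: the eventual bound $\norm{B(t)}\leq \rho<\alpha/2K$ is immediate, the extension of a dichotomy from $[T',+\infty)$ to $\mathbb{R}^{+}$ over a compact interval with integrable coefficients is standard, and two idempotents of equal rank are indeed similar. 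The graph property you worry about is actually the unproblematic part: normalizing $\Phi(T')=I$, one has $PG(T',s)=0$ for $s>T'$, so the fixed point satisfies $Px(T')=\xi$, and the map $\xi\mapsto x(T')$ is injective from the range of $P$, giving a subspace of exactly the right dimension.

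The genuine gap is in your second stage, and it is precisely the point where the sharp constant lives. A contraction on the space of \emph{bounded} continuous functions with constant $(2K/\alpha)\rho<1$ yields, for each $\xi$, a unique bounded solution --- and nothing more: boundedness and uniqueness carry no exponential rate, so ``reading off the decay rates from the contraction'' is not a step, it is the theorem. At Coppel's threshold $\alpha/4K^{2}$ the decay is recovered afterwards by a separate Gronwall-type inequality, but that inequality is exactly what fails between $\alpha/4K^{2}$ and $\alpha/2K$; if it worked, Proposition \ref{RCoppel} would already state the sharper constant. The mechanism that actually closes the argument in \cite{Wiggins} is to run the fixed point in exponentially weighted norms (equivalently, at the level of the perturbed Green's function), where for the norm $\sup_{t}e^{\beta(t-T')}\abs{x(t)}$ the operator constant becomes
\begin{equation*}
K\rho\left(\frac{1}{\alpha-\beta}+\frac{1}{\alpha+\beta}\right)=\frac{2K\rho\,\alpha}{\alpha^{2}-\beta^{2}},
\end{equation*}
so the strict inequality $\rho<\alpha/2K$ buys a contraction for every rate $\beta<\sqrt{\alpha(\alpha-2K\rho)}$, with dichotomy constants uniform on the half-line; without this weighted formulation your plan stalls exactly where you flagged it. A secondary defect: ``a symmetric backward integration'' is unavailable on $[T',+\infty)$, since there is no backward half-line on which solutions in the kernel are bounded. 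For half-line dichotomies the range of the projector is canonically determined (initial values of bounded solutions) while the kernel is an arbitrary complement, and what must be verified is the exponential \emph{growth} estimate for solutions starting in that complement --- again most cleanly extracted from the weighted estimate on the perturbed Green's function rather than from any backward fixed point.
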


\section{The nonautonomous global stability conjecture}
 The goal of this article is to introduce the analogous to the MYC tailored to the nonautonomous case in terms of exponential dichotomy spectrum.

{\bf{Nonautonomous Markus--Yamabe Conjecture (NMYC):}} Let us consider the system
\begin{equation}
\label{MY}
\dot{x} = g(t,x)
\end{equation}
where $g\colon \mathbb{R}^{+}\times\mathbb{R}^{n}\to \mathbb{R}^{n}$ is such that
\begin{itemize}
\item[\textbf{(G1)}] $g$ is continuous with respect to $t$ and $C^1$ with respect to $x$.
\item[\textbf{(G2)}] $g(t,x)=0$ if and only if $x=0$ for all $t\geq 0$.
\item[\textbf{(G3)}] For any measurable function $t\mapsto y(t)$, the Jacobian matrix $Jg(t,y(t))$ is bounded.
\item[\textbf{(G4)}] For any measurable function $t\mapsto y(t)$, the family of linear systems
\begin{equation}
\label{MYNA}
\dot{z} = Jg(t,y(t))z
\end{equation}
has an exponential dichotomy spectrum satisfying
\begin{equation}
\label{SMY}
\Sigma^+(Jg(t,y(t))) \subset (-\infty,0),
\end{equation}
\end{itemize}
is the trivial solution globally uniformly asymptotically stable for (\ref{MY})?.
\bigskip

This can be seen as a generalization of MYC to the nonautonomous framework due to the strong analogy with the autonomous case: i) we have a linear system with ``spacial" variable coefficients, ii) the linear systems have exponential stability which are spectrally described for any measurable function $t\to y(t)$.

On the other hand, there is a completely different approach to this global stability problem given by D. Cheban in \cite[Theorem 4.4]{Cheban} which is based in the concept of cocycles and other tools of nonautonomous dynamical systems theory in order to find necessary and sufficient conditions for that the null section of a finite-dimensional vectorial bundle fiber to be  globally
uniformly asymptotically stable.

\section{Main Results}



In this section we prove that the conjecture is true for $n=1$ and we study some particular cases in general dimensions which support this conjecture.

\begin{theorem}
\label{N1}
The \textnormal{\textbf{NMYC}} is verified for dimension $n=1$.
\end{theorem}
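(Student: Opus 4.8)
The plan is to reduce the scalar nonlinear equation to a scalar \emph{linear} one whose coefficient is the difference quotient of $g$, and then let hypothesis \textbf{(G4)} govern the decay. For a solution $x(t)$ with $x(t_0)=x_0\neq 0$ set
\[
a(t):=\frac{g(t,x(t))}{x(t)},
\]
so that $\dot x=a(t)\,x$ and $x(t)=x_0\exp\!\big(\int_{t_0}^t a(\tau)\,d\tau\big)$; by the mean value theorem $a(t)=\partial_x g(t,\xi(t))$ for a measurable $\xi(t)$ between $0$ and $x(t)$. Everything thus reduces to controlling $\int_{t_0}^t a$. By symmetry it suffices to treat the case $x_0>0$.

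Before invoking the spectrum I would extract a sign/monotonicity property. By \textbf{(G2)} together with the continuity in \textbf{(G1)}, for each fixed $x>0$ the map $t\mapsto g(t,x)$ never vanishes, hence has constant sign, and for each fixed $t$ the $C^1$ map $x\mapsto g(t,x)$ never vanishes on $(0,\infty)$, hence has constant sign there. A function constant along every horizontal and every vertical line of $\mathbb{R}^+\times(0,\infty)$ is globally constant, so $g$ carries a single sign on $\mathbb{R}^+\times(0,\infty)$. To fix it I use \textbf{(G4)} with $y\equiv 0$: if $g>0$ on $\mathbb{R}^+\times(0,\infty)$ then $\partial_x g(t,0)=\lim_{x\to 0^+}g(t,x)/x\geq 0$ for all $t$, forcing the right endpoint of $\Sigma^+(\partial_x g(\cdot,0))$ to be $\geq 0$ and contradicting \eqref{SMY}. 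Hence $x\,g(t,x)<0$ for $x\neq 0$, so $\frac{d}{dt}\tfrac12 x(t)^2=x(t)g(t,x(t))<0$ and $t\mapsto|x(t)|$ is non-increasing. This already gives Lyapunov stability, forward invariance of every ball, and an overshoot constant equal to $1$.

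The decay rate is where uniformity must be earned, and I regard it as the main obstacle: applying \textbf{(G4)} directly to $\xi$ yields, via Remark~\ref{observacion}(ii), a bound $\exp\!\big(\int_s^t a\big)\le K e^{-\alpha(t-s)}$ but with constants depending on the trajectory. To make them uniform I majorize $a$ by a \emph{trajectory-independent} coefficient. For $R>0$ put
\[
m_R(t):=\sup_{0<x\le R}\frac{g(t,x)}{x}\le 0 ,
\]
so any solution with $0<x(t)\le R$ satisfies $a(t)\le m_R(t)$, whence $x(t)\le x_0\exp\!\big(\int_{t_0}^t m_R\big)$; since $|x(t)|\le|x_0|$ I may take $R=x_0$. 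The map $(t,x)\mapsto g(t,x)/x$ extends continuously at $x=0$ to $\partial_x g(t,0)$ and is bounded by \textbf{(G3)}, and a measurable selection of its maximizer lets me write $m_R(t)=\partial_x g(t,\xi^\ast(t))$ for a measurable $\xi^\ast$ with $|\xi^\ast|\le R$. Then \textbf{(G4)} applies to $\xi^\ast$ and Remark~\ref{observacion}(ii) gives $\exp\!\big(\int_s^t m_R\big)\le K_R e^{-\delta_R(t-s)}$ with $K_R,\delta_R>0$ depending only on $R$. Therefore $|x(t)|\le K_R|x_0|e^{-\delta_R(t-t_0)}$ uniformly in $t_0$ and in the solution.

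Finally I would package the bounds $|x(t)|\le|x_0|$ and $|x(t)|\le K_{|x_0|}|x_0|e^{-\delta_{|x_0|}(t-t_0)}$ into a single $\mathcal{KL}$ function, which is routine. The delicate points are precisely the measurability of $m_R$ and the measurable selection of its maximizer (and the analogous measurability of $\xi$ from the mean value theorem), together with the bookkeeping organizing $\{(K_R,\delta_R)\}$ into an admissible $\mathcal{KL}$ estimate; the genuinely conceptual step is the passage from the solution-dependent bound furnished by a single use of \textbf{(G4)} to the uniform one obtained through the majorant $m_R$.
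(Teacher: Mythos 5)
Your proposal is correct, and its core is the same reduction the paper uses: since $g(t,0)=0$, the mean value theorem lets each solution be viewed as a solution of a scalar linear equation $\dot z = g_2(t,\theta_t)z$ with $\theta_t$ between $0$ and $x(t)$, and then \textbf{(G4)} together with Remark~\ref{observacion}(ii) makes that linear equation uniformly asymptotically stable. The paper stops there, leaving implicit that the constants $K,\alpha$ furnished by \textbf{(G4)} depend on the measurable function $\theta_t$, hence on the trajectory and on $(t_0,x_0)$; the passage from ``each solution obeys \emph{some} exponentially stable linear equation'' to a single $\mathcal{KL}$ bound --- the global \emph{uniform} asymptotic stability that NMYC demands --- is not spelled out. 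You identify exactly this point and repair it: your sign lemma $xg(t,x)<0$ (derived neatly from \textbf{(G2)} plus \textbf{(G4)} applied at $y\equiv 0$, which rules out $\partial_x g(t,0)\ge 0$ since then $\exp\bigl(\int_{t_0}^{t}\partial_x g(\tau,0)\,d\tau\bigr)\ge 1$ contradicts uniform asymptotic stability) confines trajectories to $|x(t)|\le |x_0|$, and the majorant $m_R(t)=\sup_{0<x\le R}g(t,x)/x$, realized through measurable selection as $\partial_x g(t,\xi^{\ast}(t))$ for \emph{one} fixed measurable $\xi^{\ast}$ per radius, lets a single application of \textbf{(G4)} produce constants $(K_R,\delta_R)$ depending only on $R=|x_0|$, which assemble (using monotonicity of $R\mapsto m_R$) into a genuine $\mathcal{KL}$ function. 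The points you flag as delicate are indeed the only loose ends, and they are manageable: $m_R$ is measurable as a countable supremum over rational $x$ (by continuity in $x$); the argmax and mean-value selections are standard for Carath\'eodory functions, and in any case no worse than the measurability of $\theta_t$ that the paper itself asserts without comment; and boundedness of $m_R$ follows from \textbf{(G3)} by evaluating $Jg$ along suitably constructed measurable functions. In short, you follow the paper's skeleton but earn the uniformity the published proof takes for granted: the paper's version is shorter and highlights the spectral reduction, while yours actually delivers the trajectory-independent estimate that the stated conclusion requires.
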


\begin{proof}
Let us consider the nonlinear system
\begin{equation}
\label{nlescalar}
\dot{x} = g(t,x),
\end{equation}
 with the initial condition $x_{0}\neq 0$ at time $t=t_{0}$. Without loss of generality, it will be supposed that $x_{0}>0$. By uniqueness of the solution it follows that $x(t)>0$ for any $t\geq t_{0}$.

Notice that any solution of \eqref{nlescalar} can be written as follows
$$
x(t) = x_{0} + \int_{t_0}^t g_2(\tau, \theta_{\tau})x(\tau) \, d \tau \quad \textnormal{where} \quad 0<\theta_{\tau} < x(\tau),
$$
where $g_2$ denotes the partial derivative with respect to the second variable. This identity is equivalent to
\begin{displaymath}
\begin{array}{rcl}
\displaystyle\frac{d}{dt}\ln\left(x_{0}+\int_{t_{0}}^{t}g_{2}(\tau,\theta_{\tau})x(\tau)\,d\tau\right) &=& g_{2}(t,\theta_{t}) \quad \textnormal{with $0<\theta_{t}< x(t)$}.
\end{array}
\end{displaymath}

We integrate between $t_{0}$ and $t$ obtaining that
\begin{displaymath}
x(t)=x_{0}\exp\left(\int_{t_{0}}^{t}g_{2}(\tau,\theta_{\tau})\,d\tau\right)\quad \textnormal{with $0<\theta_{\tau}< x(\tau)$},
\end{displaymath}
that is, the solutions of (\ref{nlescalar}) can be seen as solutions of the linear equation
\begin{equation}
\label{apro-nl}
\dot{z} = g_2(t,\theta_{t})z,
\end{equation}
for some measurable function $t\mapsto \theta_{t}$. Now, the assumption
\begin{displaymath}
\Sigma^{+}(g_{2}(t,\theta_{t}))\subset (-\infty,0)
\end{displaymath}
is equivalent (see Remark \ref{observacion}) to the uniform asymptotical stability of (\ref{apro-nl}).
\end{proof}

\begin{corollary}
Let us consider the triangular system
\begin{equation}
\label{triangular}
\begin{array}{rcl}
\dot{x}_{1}&=&g_{1}(t,x_{1},x_{2},\ldots,x_{n})\\
\dot{x}_{2}&=&g_{2}(t,x_{2},\ldots,x_{n})\\
 &\vdots&  \\
\dot{x}_{n}&=&g_{n}(t,x_{n}),
\end{array}
\end{equation}
whose right part, namely $G(t,x)$, verifies \textbf{(G1)}--\textbf{(G4)},
then the trivial solution of (\ref{triangular}) is globally uniformly asymptotically stable.
\end{corollary}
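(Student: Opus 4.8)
The plan is to prove the corollary by induction on the dimension $n$, peeling off one equation at a time from the bottom of the triangular system and applying Theorem \ref{N1} as the base case. The last equation $\dot{x}_n = g_n(t,x_n)$ is a scalar equation whose right-hand side inherits hypotheses \textbf{(G1)}--\textbf{(G4)}; by Theorem \ref{N1} its trivial solution is globally uniformly asymptotically stable, so $x_n(t) \to 0$ as $t \to \infty$, and moreover $t \mapsto x_n(t)$ is a well-defined measurable (indeed continuous) function once an initial condition is fixed.

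The inductive step is where the real work lies. Suppose the claim holds for triangular systems of dimension $n-1$, and consider the full system. Having solved the bottom equation, I would substitute the known solution $t \mapsto x_n(t)$ into the remaining $n-1$ equations. The key observation is that the $(n-1)$-dimensional subsystem in $(x_1,\dots,x_{n-1})$ is again triangular, but now with a time-dependent forcing coming through $x_n(t)$. To bring this into the framework of Theorem \ref{N1} applied coordinate by coordinate, I would again use the mean-value / integral representation exploited in the proof of Theorem \ref{N1}: writing each $g_i(t,x_i,\dots,x_n)$ evaluated along the solution as $g_{i,i}(t,\theta_t^{(i)})\,x_i$ plus the contribution of the already-determined lower coordinates, each solution coordinate satisfies a scalar linear equation $\dot{z}_i = g_{i,i}(t,\theta_t^{(i)}) z_i$ driven by lower-index data. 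Hypothesis \textbf{(G4)} guarantees that the exponential dichotomy spectrum of the diagonal entry $g_{i,i}$, evaluated along the measurable curve obtained from the solution, lies in $(-\infty,0)$, hence by Remark \ref{observacion}(ii) each diagonal scalar equation is uniformly asymptotically stable.

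The main obstacle I anticipate is controlling the propagation of the lower-coordinate contributions upward: showing that global uniform asymptotic stability of the lower block, combined with the uniform exponential decay of each diagonal scalar equation, yields global uniform asymptotic stability of the full solution rather than merely convergence to zero. Concretely, for the top coordinate one obtains a variation-of-constants representation
\begin{displaymath}
x_i(t) = \psi_i(t,t_0)x_i(t_0) + \int_{t_0}^{t} \psi_i(t,\tau)\, h_i(\tau)\, d\tau,
\end{displaymath}
where $\psi_i$ is the (uniformly exponentially decaying) transition function of the diagonal equation and $h_i$ collects the lower-coordinate terms, which themselves decay by the inductive hypothesis. The delicate point is to estimate the convolution so that the resulting bound has the $\mathcal{KL}$ form required by Definition \ref{UAS}, with a decay rate and amplitude that are \emph{uniform} in $t_0$. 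I would handle this by invoking the uniform exponential bound $|\psi_i(t,\tau)| \leq K e^{-\alpha(t-\tau)}$ from the dichotomy with projector $P=I$, together with an exponential decay estimate on $h_i$ inherited from the induction, and then bounding the convolution integral of two decaying exponentials. The fact that all constants in \textbf{(G3)}--\textbf{(G4)} are uniform along arbitrary measurable curves is precisely what makes the resulting $\mathcal{KL}$ estimate independent of $t_0$, closing the induction.
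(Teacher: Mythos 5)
Your overall strategy coincides with the paper's: peel off the decoupled bottom scalar equation, apply Theorem \ref{N1} to it, substitute the resulting solution upward, and combine $\mathcal{KL}$ bounds. The differences are in execution. The paper only writes out the planar case, and at the coupling step it simply applies Theorem \ref{N1} a second time to the forced equation $\dot{x}=f(t,x,y(t))$; your explicit variation-of-constants representation with the inhomogeneous term $h_i$ is actually the more careful route here, since the forced scalar equation does not satisfy \textbf{(G2)} (in general $f(t,0,y(t))\neq 0$), so Theorem \ref{N1} does not literally apply to it and the convolution estimate is the honest way to close the cascade. Conversely, the paper supplies a justification you skip: it converts \textbf{(G4)} into negativity of the spectra of the diagonal entries via the Batelli--Palmer criteria for triangular systems \cite{Batelli}, which require the boundedness in \textbf{(G3)}.

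That skipped step is your first genuine gap: you assert that \textbf{(G4)} ``guarantees'' that the exponential dichotomy spectrum of each diagonal entry $g_{i,i}$ along measurable curves lies in $(-\infty,0)$, but the spectrum of a triangular system does not in general equal the union of the spectra of its diagonal entries, so this needs an argument. It can be repaired without \cite{Batelli} in your special situation: $\Sigma^{+}\subset(-\infty,0)$ is equivalent (Remark \ref{observacion}) to the uniform bound $\norm{\Phi(t)\Phi^{-1}(s)}\leq Ke^{-\alpha(t-s)}$, the last row of an upper triangular system is $(0,\ldots,0,a_{nn}(t))$ so the last coordinate decouples and inherits this bound, and the subspaces $\{z_{i+1}=\cdots=z_n=0\}$ are invariant so the leading principal blocks inherit it too; iterating extracts every diagonal entry. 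The second gap is your claim that $h_i$ enjoys ``an exponential decay estimate inherited from the induction'': the inductive hypothesis yields only a $\mathcal{KL}$ bound on the lower coordinates (global uniform asymptotic stability of a nonlinear system does not give exponential decay with constants uniform in the initial condition), so ``a convolution of two decaying exponentials'' is not available as stated. The fix is standard and compatible with your formula: by \textbf{(G3)} and the mean value theorem $|h_i(\tau)|\leq L\sum_{j>i}|x_j(\tau)|\leq L\,\beta(\,\cdot\,,\tau-t_0)$ with $\beta\in\mathcal{KL}$, and splitting the integral at $(t+t_0)/2$ bounds each half by a $\mathcal{KL}$ function of $t-t_0$. (You also gloss the uniformity in $t_0$ and in the curve $\theta^{(i)}_t$ of the constants $K,\alpha$ for the transition functions $\psi_i$, but the paper's own proof of Theorem \ref{N1} is silent on exactly the same point, so this matches the paper's level of rigor.)
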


\begin{proof}
The assumption \textbf{(G4)} says that for any family of measurable functions $t\mapsto \theta_{i}(t)$ with $t\geq 0$ and $i=1,\ldots,n$, it follows that
\begin{displaymath}
\Sigma^{+}\left[JG(t,\theta_{1}(t),\ldots,\theta_{n}(t))
\right] \subset (-\infty,0),
\end{displaymath}
where the matrix $JG$ is defined by
\begin{displaymath}
JG(t,x_{1},\ldots,x_{n})_{ij}=\left\{
\begin{array}{ccl}
\frac{\partial g_{i}}{\partial x_{j}}(t,x_{i},\ldots,x_{n}) &\textnormal{if}& i\leq j, \\
0 &\textnormal{if} & i>j.
\end{array}\right.
\end{displaymath}

For simplicity we will prove the planar case
\begin{equation}
\label{planar}
\left\{\begin{array}{rcl}
\dot{x} &=& f(t,x,y)\\
\dot{y} &=& g(t,y).
\end{array}\right.
\end{equation}

As the jacobian matrix $JG$ of (\ref{planar}) is upper triangular, the assumption \textbf{(G3)} combined with Theorems 1 and 2 from \cite{Batelli} (see also p.540 from the same reference for details)
implies that
\begin{displaymath}
\Sigma^{+}\left[JG(t,\theta_{1}(t),\theta_{2}(t))\right]=\Sigma^{+}\left[\frac{\partial f}{\partial x}(t,\theta_{1}(t),\theta_{2}(t))\right]\cup \Sigma^{+}\left[\frac{\partial g}{\partial y}(t,\theta_{2}(t))\right],
\end{displaymath}
and \textbf{(G4)} allows to deduce that
\begin{equation}
\label{spectra}
\Sigma^{+}\left[\frac{\partial f}{\partial x}(t,\gamma_{1}(t),\gamma_{2}(t))\right]\subset (-\infty,0) \quad \textnormal{and} \quad
\Sigma^{+}\left[\frac{\partial g}{\partial y}(t,\gamma_{3}(t))\right]\subset (-\infty,0)
\end{equation}
for any set of measurable functions $t\mapsto \gamma_{i}(t)$ with $(i=1,2,3)$.

Let $t\to (x(t),y(t))$ be a solution of (\ref{planar}) passing through $(x_{0},y_{0})$ at $t=t_{0}$. Theorem \ref{N1}
combined with the second property of (\ref{spectra}) says that the origin is a globally uniformly asymptotically stable solution of the equation $\dot{y}=g(t,y)$ and then $t\mapsto y(t)$ satisfies
$|y(t)|\leq  \beta_{2}(|y_{0}|,t-t_{0})$ for any $t\geq t_{0}$.

Now, the component $t\mapsto x(t)$ is solution of the equation
$$
\dot{x}=f(t,x,y(t)) \quad \textnormal{with $x(t_{0})=x_{0}$}
$$
and we use again Theorem \ref{N1} combined with the first property of (\ref{spectra}) to deduce
$|x(t)|\leq \beta_{1}(|x_{0}|,t-t_{0})$ for any $t\geq t_{0}$. The properties of $\mathcal{KL}$ functions implies the existence of $\beta\in \mathcal{KL}$  such that
\begin{displaymath}
|x(t)|+|y(t)|\leq \beta(|x_{0}|+|y_{0}|,t-t_{0})
\end{displaymath}
and the global uniform stability of the trivial solution follows.
\end{proof}

\medskip

The following result shows a family of perturbed linear systems such that the conjecture is true for dimensions $n\geq 1$.

\begin{theorem}
Let the bounded and continuous linear system
\begin{equation}
\label{lin1}
\dot{x} = A(t)x
\end{equation} which has exponential dichotomy on $\mathbb{R}^{+}$ with constants $K \geq 1, \alpha > 0$ and projector $P = I$. If the nonlinear system
\begin{equation}
\label{nolin1}
\dot{x} = A(t)x+ f(t,x)
\end{equation}
 where $f\colon \mathbb{R}^{+} \times \mathbb{R}^n \to \mathbb{R}^n$ is continuous with respect to $t$, $C^1$ with respect to $x$, $f(t,x) = 0$  if and only if $x=0$ for all $t \in \mathbb{R}^{+}$ and for any measurable function $y\colon \mathbb{R}^{+}\to \mathbb{R}^{n}$ it follows that
\begin{equation}
\label{derivada}
 \displaystyle \sup_{t \in \mathbb{R}^{+}} \abs{Jf(t,y(t))} < \frac{\alpha}{4 K^2}.
 \end{equation}
 Then the system (\ref{nolin1}) satisfies hypothesis of  NMYC and the trivial solution is globally uniformly asymptotically stable.
\end{theorem}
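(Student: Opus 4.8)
The plan is to sidestep the (still open) \textbf{NMYC} and instead prove the global uniform asymptotic stability \emph{directly}, by observing that every solution of \eqref{nolin1} is simultaneously the solution of an associated linear system which inherits an exponential dichotomy from \eqref{lin1} through the roughness Proposition \ref{RCoppel}. First I would dispose of the hypotheses \textbf{(G1)}, \textbf{(G3)}, \textbf{(G4)}: \textbf{(G1)} is immediate since $A(t)x+f(t,x)$ is continuous in $t$ and $C^{1}$ in $x$; \textbf{(G3)} holds because $Jg(t,y(t))=A(t)+Jf(t,y(t))$ is bounded, $A$ being bounded and $\norm{Jf}$ controlled by \eqref{derivada}; and \textbf{(G4)} follows by applying Proposition \ref{RCoppel} to the perturbation $B(t)=Jf(t,y(t))$, whose norm is $<\alpha/4K^{2}$, so that $\dot z=[A(t)+Jf(t,y(t))]z$ has an exponential dichotomy with a projector $Q$ having the same null space as $P=I$. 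Since $\ker I=\{0\}$ this forces $Q=I$, i.e. uniform asymptotic stability, whence $\Sigma^{+}(Jg(t,y(t)))\subset(-\infty,0)$ by Remark \ref{observacion}.

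The core step is the reduction of the nonlinear flow to a linear one. Fix an initial datum and let $t\mapsto x(t)$ be the corresponding solution of \eqref{nolin1}. Since $f(t,0)=0$ and $f$ is $C^{1}$ in $x$, the fundamental theorem of calculus along the segment $s\mapsto s\,x(t)$ gives
\begin{displaymath}
f(t,x(t))=\left(\int_{0}^{1}Jf(t,s\,x(t))\,ds\right)x(t)=:B(t)\,x(t).
\end{displaymath}
The map $t\mapsto B(t)$ is measurable (as $x(\cdot)$ is continuous and $Jf$ is continuous in its second variable), and choosing for each $t$ a measurable maximizing $s$, so that $w(t):=s^{*}(t)\,x(t)$ defines an admissible measurable argument, \eqref{derivada} yields $\norm{B(t)}\le\sup_{\tau}\norm{Jf(\tau,w(\tau))}<\alpha/4K^{2}$. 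Consequently $x(t)$ is a solution of the linear nonautonomous system $\dot x=[A(t)+B(t)]x$.

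Now I would invoke Proposition \ref{RCoppel}: as \eqref{lin1} has an exponential dichotomy on $\mathbb{R}^{+}$ with $K,\alpha,P=I$ and $\sup_{t}\norm{B(t)}<\alpha/4K^{2}$, the system $\dot x=[A(t)+B(t)]x$ has an exponential dichotomy with a projector $Q$ whose null space equals $\ker P=\{0\}$; hence $Q=I$ and, by Remark \ref{observacion}(i), this linear system is uniformly asymptotically stable, so its transition matrix $\Psi(t,t_{0})$ obeys $\norm{\Psi(t,t_{0})}\le\widetilde K\,e^{-\widetilde\alpha(t-t_{0})}$. Writing the solution as $x(t)=\Psi(t,t_{0})x_{0}$ gives $\abs{x(t)}\le\widetilde K\,e^{-\widetilde\alpha(t-t_{0})}\abs{x_{0}}$ for all $t\ge t_{0}\ge0$, which is of the required $\mathcal{KL}$ form and establishes that the origin is a global attractor.

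The delicate point, and the step I expect to be the main obstacle, is the \emph{uniformity} of the constants $\widetilde K,\widetilde\alpha$ over all solutions: a priori each solution generates its own $B(t)$, hence its own dichotomy constants, whereas uniform asymptotic stability demands a single $\beta\in\mathcal{KL}$ valid for every initial condition. To close this gap one must read \eqref{derivada} as furnishing a bound $\sup_{(t,p)}\norm{Jf(t,p)}\le\delta_{0}<\alpha/4K^{2}$ independent of the argument, and exploit that the roughness constants delivered by Proposition \ref{RCoppel} depend only on $K,\alpha$ and $\delta_{0}$, not on the particular perturbation; the same $\widetilde K,\widetilde\alpha$ then serve all solutions. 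A secondary verification is \textbf{(G2)}: $g(t,0)=0$ is immediate, while the converse $g(t,x)=0\Rightarrow x=0$ reduces, through the representation $g(t,x)=[A(t)+\int_{0}^{1}Jf(t,sx)\,ds]\,x$, to the nonsingularity of the bracketed matrix, and this is the one place where the structural assumption $f(t,x)=0\iff x=0$ must genuinely be brought to bear rather than being inferred from the dichotomy alone.
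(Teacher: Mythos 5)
Your proposal is correct, and its first half (checking \textbf{(G1)}, \textbf{(G3)}, \textbf{(G4)} by applying Proposition~\ref{RCoppel} to $B(t)=Jf(t,y(t))$ and noting that a projector with trivial null space must equal $I$) coincides with the paper's proof. For the stability half, however, you take a genuinely different route. The paper argues by variation of constants: it writes $x(t)=\Phi(t,t_0)\xi+\int_{t_0}^{t}\Phi(t,s)f(s,x(s))\,ds$, bounds $|f(s,x)|\le \frac{\alpha}{4K^2}|x|$ via the mean value theorem and \eqref{derivada}, and applies Gronwall's lemma to obtain the explicit estimate $|x(t,t_0,\xi)|\le K e^{-\alpha(1-\frac{1}{4K})(t-t_0)}|\xi|$. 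You instead factor $f(t,x(t))=B(t)x(t)$ with $B(t)=\int_0^1 Jf(t,s\,x(t))\,ds$, regard each nonlinear solution as a solution of the linear system $\dot x=[A(t)+B(t)]x$, and invoke Proposition~\ref{RCoppel} a second time, per trajectory --- this is precisely the device the paper itself uses for $n=1$ in Theorem~\ref{N1}, pushed to dimension $n$ by the integral form of the mean value theorem, and your measurable-maximizer selection $w(t)=s^{*}(t)x(t)$ is a legitimate way to bring the ``for any measurable function'' hypothesis \eqref{derivada} to bear on $\Vert B(t)\Vert$ (modulo routine measurability details, about which the paper is equally cavalier with its MVT point $\theta_s$). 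What the Gronwall route buys is that the uniformity problem you flag never arises: the decay constants $K$ and $\alpha(1-\frac{1}{4K})$ come out once and for all. Your route must patch that gap, and your patch is essentially right but deserves one more sentence than you give it: as literally stated, \eqref{derivada} yields a strict bound for each measurable $y$ separately, not a single uniform $\delta_0<\alpha/4K^2$; what saves the argument is that the constants produced by Coppel's roughness theorem (exponent of the form $\alpha-2K\delta$, coefficient a bounded multiple of $K$) stay bounded away from degeneracy as $\delta\uparrow\alpha/(4K^2)$, so the per-trajectory bounds already furnish one pair $\widetilde K,\widetilde\alpha$ valid for all solutions.

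One caveat on your side remark about \textbf{(G2)}: you are more careful than the paper (which asserts \textbf{(G1)}--\textbf{(G3)} hold ``trivially''), but your proposed reduction does not actually close it under the pointwise reading of \textbf{(G2)}. Nonsingularity of $A(t)+\int_0^1 Jf(t,sx)\,ds$ is not a consequence of $f(t,x)=0\iff x=0$; indeed, take $n=1$, $a(t)=-1+2\cos t$ (which has exponential dichotomy on $\mathbb{R}^+$ with $P=I$, $\alpha=1$, $K=e^4$) and $f(t,x)=-\epsilon x$ with $0<\epsilon<\alpha/4K^2$: all hypotheses of the theorem hold, yet $g(t,x)=(a(t)-\epsilon)x$ vanishes for every $x$ at any time $t_1$ with $a(t_1)=\epsilon$. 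Since \textbf{(G2)} plays no role in the stability argument, this affects only the clause ``satisfies hypothesis of NMYC'' --- a blemish your proof shares with the paper's --- and not the conclusion of global uniform asymptotic stability, which both arguments establish.
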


\begin{proof}
First, note that (\ref{nolin1}) satisfies trivially \textbf{(G1)--(G3)}. We only need to prove that the linearization
\begin{equation}
\label{auxiliar}
    \dot{z} = [A(t) + Jf(t,y(t))]z
\end{equation}
  is such that $\Sigma^{+}(A + Jf)  \subset (-\infty, 0)$ for any measurable function $t\mapsto y(t)$ to deduce \textbf{(G4)}.

By (i)--(ii) of Remark \ref{observacion}, the system (\ref{lin1}) is uniformly asymptotically stable, which is equivalent to $\Sigma^{+}(A) \subset (-\infty, 0)$ and also equivalent to the exponential dichotomy on $\mathbb{R}^{+}$ with projector $P = I$. Now, the property (\ref{derivada}) combined with Proposition \ref{RCoppel} imply that the family (\ref{auxiliar}) has exponential dichotomy with the same projector $P = I$ and as consequence $\Sigma^{+}(A + Jf)  \subset (-\infty, 0)$.

  In order to prove that the trivial solution is uniformly asymptotically stable for the system (\ref{nolin1}), let  $x(t,t_0,\xi)$ be the solution of (\ref{nolin1}) passing through $\xi$ at $t = t_0.$ By using the mean value theorem combined with (\ref{derivada}) we have that

$$
\begin{array}{rcl}
|x(t,t_0,\xi)| & \leq & K e^{-\alpha(t-t_0)} |\xi| + \displaystyle K e^{-\alpha t} \int_{t_0}^{t} e^{\alpha s} |f(s,x(s,t_0,\xi))| \, ds\\\\
& \leq &  K e^{-\alpha(t-t_0)} |\xi| + \displaystyle K e^{-\alpha t} \int_{t_0}^{t} e^{\alpha s}  \frac{\alpha}{4K^2} |x(s,t_0, \xi)|\, ds.
\end{array}
$$

Notice that

$$e^{\alpha t} |x(t,t_0,\xi)| \leq K e^{\alpha t_0} |\xi| + \frac{\alpha}{4 K} \int_{t_0}^{t} e^{\alpha s}  |x(s,t_0, \xi)|\, ds,  $$
and Gronwall's lemma implies that

$$|x(t,t_0,\xi)| \leq  K e^{-\alpha (1 - \frac{1}{4 K}) (t- t_0)} |\xi|,$$
which implies the uniform asymptotical stability (see Definition \ref{UAS}) of the origin and the result follows.
\end{proof}

\section{A complementary result}

The following result links the stability of the origin in the autonomous and nonautonomous frameworks by showing a Hurwitz vector field $F\colon \mathbb{R}^{3}\to \mathbb{R}^{3}$, which  has the origin as global attractor. In spite that its proof is known \cite{CG}, we provide an alternative proof by using nonautonomous tools based in the exponential dichotomy spectrum.

\begin{theorem}
Let $ F = \lambda \, I + H\colon \mathbb{R}^{3}\to \mathbb{R}^{3}$ where
$$ H(x,y,z) =  g(z) \, (a(z) \, x + b(z) \, y) \, \left(\begin{array}{c}-b(z)\\
a(z)\\0
\end{array}\right)
\,  $$ with $ \lambda < 0 $, $ a,b,g \in \mathbb{R}[z] $ and $ F(0) = 0
$. Then there exists a related bidimensional nonautonomous linear system
\begin{equation}
\label{lineal-reducido}
\dot{u}=C(t)u
\end{equation}
which has exponential dichotomy with $P = I$ on $\mathbb{R}^+$ with
 $\Sigma_{ED}^{+}(C) = \{\lambda\}.$ Moreover, $F$ has the origin as global attractor.
\end{theorem}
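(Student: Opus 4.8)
The plan is to exploit the skew-product structure of $\dot X=F(X)$, $X=(x,y,z)$. The third equation decouples as $\dot z=\lambda z$, hence $z(t)=z_0e^{\lambda t}$, and since $\lambda<0$ we get $z(t)\to 0$ exponentially. Substituting $z(t)$ into the first two equations, the pair $u(t)=(x(t),y(t))^{\top}$ solves \emph{exactly} the linear nonautonomous system \eqref{lineal-reducido} with
\begin{displaymath}
C(t)=\lambda I+g(z(t))\,N(z(t)),\qquad
N(z)=\begin{pmatrix}-b(z)\\ a(z)\end{pmatrix}\begin{pmatrix}a(z)& b(z)\end{pmatrix}.
\end{displaymath}
The algebraic fact that drives everything is that $N(z)$ is nilpotent for every $z$: since $\begin{pmatrix}a(z)&b(z)\end{pmatrix}\begin{pmatrix}-b(z)\\ a(z)\end{pmatrix}=0$, we have $N(z)^2=0$, so both eigenvalues of $N(z)$ vanish and the frozen matrix $C(t)$ has $\lambda$ as a double eigenvalue for every $t$. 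This is what should force the spectrum to collapse onto $\{\lambda\}$.

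Next I would pass to the limit $t\to\infty$. As $a,b,g$ are polynomials and $z(t)\to 0$ exponentially, $C(t)$ converges exponentially to the constant matrix $C_\infty:=\lambda I+g(0)N(0)$, whose two eigenvalues both equal $\lambda<0$; thus $\dot u=C_\infty u$ is uniformly asymptotically stable and, by Remark \ref{observacion}, has an exponential dichotomy on $\mathbb{R}^+$ with projector $P=I$ and some constants $K_0,\alpha_0$. Viewing $\dot u=C(t)u$ as the perturbation of $\dot u=C_\infty u$ by $B(t):=C(t)-C_\infty$, which satisfies $\limsup_{t\to\infty}\|B(t)\|=0<\alpha_0/2K_0$, Proposition \ref{RWiggins} delivers an exponential dichotomy for $\dot u=C(t)u$ on $\mathbb{R}^+$ with projector similar to $I$, hence $P=I$.

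To pin down $\Sigma^{+}_{ED}(C)$ I would repeat this roughness argument on the shifted systems $\dot u=[C(t)-\gamma I]u$, observing that the perturbation $B(t)=C(t)-C_\infty$ is \emph{the same} for all $\gamma$, so $\limsup_{t\to\infty}\|B(t)\|=0$ always meets the hypothesis of Proposition \ref{RWiggins}. The limit matrix $C_\infty-\gamma I$ has the double eigenvalue $\lambda-\gamma$: for $\gamma>\lambda$ it is Hurwitz and the shifted limit system has a dichotomy with projector $P=I$, while for $\gamma<\lambda$ all its solutions grow and the projector is $P=0$. In either case Proposition \ref{RWiggins} transfers the dichotomy to $\dot u=[C(t)-\gamma I]u$, so every $\gamma\neq\lambda$ lies in the resolvent and $\Sigma^{+}_{ED}(C)\subseteq\{\lambda\}$. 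Finally, the rank of the dichotomy projector equals $0$ for $\gamma<\lambda$ and $2$ for $\gamma>\lambda$; since by Remark \ref{observacion}(iii) this rank is constant along any gap of the resolvent, the jump cannot occur across a point of the resolvent, which forces $\lambda\in\Sigma^{+}_{ED}(C)$ and therefore $\Sigma^{+}_{ED}(C)=\{\lambda\}$.

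The global attraction then follows with no further estimate: because $(x(t),y(t))$ is a genuine solution of the uniformly asymptotically stable system $\dot u=C(t)u$, we have $|(x(t),y(t))|\le Ke^{-\alpha(t-t_0)}|(x_0,y_0)|\to 0$, while $z(t)=z_0e^{\lambda t}\to 0$, so every trajectory of $\dot X=F(X)$ tends to the origin. I expect the main obstacle to be the \emph{exact} identification of the spectrum, specifically certifying that $\lambda$ itself belongs to $\Sigma^{+}_{ED}(C)$ rather than merely excluding every $\gamma\neq\lambda$; the nilpotency of $N(z)$ (the double eigenvalue at $\lambda$) together with the constancy of the projector rank across resolvent gaps is the ingredient I would lean on. A minor technical check is that the convergence $C(t)\to C_\infty$ is fast enough for the $\limsup$ hypothesis of Proposition \ref{RWiggins}, which here is automatic since $B(t)\to 0$.
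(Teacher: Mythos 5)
Your proposal is correct, and its skeleton coincides with the paper's proof: decouple $z(t)=z_{0}e^{\lambda t}$, regard $(x(t),y(t))$ as an exact solution of the planar linear system \eqref{lineal-reducido}, obtain an exponential dichotomy for the whole shifted family $\dot{u}=[C(t)-\gamma I]u$, $\gamma\neq\lambda$, from the roughness result of Proposition \ref{RWiggins}, and then force $\lambda\in\Sigma_{ED}^{+}(C)$ by the constancy of the projector's rank on connected components of the resolvent (Remark \ref{observacion}(iii)), concluding global attraction from the dichotomy with $P=I$ together with $z(t)\to 0$. The one genuine difference is the base point of the roughness argument, and it works in your favor: the paper perturbs around $\lambda I$ (respectively $(\lambda-\gamma)I$), asserting that $C_{0}(t)\to 0$ as $t\to+\infty$; but since $z(t)\to 0$, in fact $C_{0}(t)\to g(0)N(0)$, whose entries, such as $-a(0)b(0)g(0)$, vanish only when $g(0)=0$ or $a(0)=b(0)=0$ --- and the hypothesis $F(0)=0$ is automatic for this family, so it imposes no such constraint. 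You instead perturb around the constant matrix $C_{\infty}=\lambda I+g(0)N(0)$ and invoke the nilpotency $N(z)^{2}=0$ (trace and determinant of $N(z)$ both vanish) to guarantee that $C_{\infty}-\gamma I$ still has the double eigenvalue $\lambda-\gamma$, hence a dichotomy with $P=I$ for $\gamma>\lambda$ and $P=0$ for $\gamma<\lambda$ even in the presence of a Jordan block; this covers the general case and quietly repairs the gap in the paper's argument, where the nilpotent structure of $C_{0}$ is never actually used. Your exact-spectrum step and the final attraction step match the paper verbatim; the only point you share with the paper and could make explicit is that $C(t)$, and therefore the dichotomy constants $K,\alpha$, depend on $z_{0}$, which is harmless for pointwise global attraction (each trajectory decays) but is the reason the conclusion is attraction rather than uniform attraction.
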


\begin{proof}
 Note that $ (x(t),y(t),z(t)) $ is a solution of the differential system $ \dot{x} = F(x) $ if and only if $ z(t) = z_0 \, e^{\lambda t} $. As the case $z_{0}=0$ is straightforward, we will only consider the case  $z_0 \neq 0$.

Thus $ (x(t), y(t)) $ is a solution of the nonautonomous linear system (\ref{lineal-reducido}) with
$$ C(t)  = \left(
                  \begin{array}{cc}
                    \lambda - A(t)B(t)G(t) & -B(t)^2 G(t) \\
                    A(t)^2 G(t) &  \lambda + A(t)B(t)G(t)\\
                  \end{array}
                \right) $$
where $ (A,B,G)(t) = (a,b,g)(z_0 \, e^{\lambda t}) $.
Moreover, $C(t)$ can be written as $\lambda I+C_{0}(t)$:
$$
 \left(
                  \begin{array}{cc}
                    \lambda & 0  \\
                     0 & \lambda  \\
                  \end{array}
                \right)+  \left(
                  \begin{array}{cc}
                    - A(t)B(t)G(t) & -B(t)^2 G(t) \\
                    A(t)^2 G(t) &   A(t)B(t)G(t)\\
                  \end{array}
                \right),
$$
where $C_{0}(t)\to 0$ when $t\to +\infty$.

It is easy to see that the system $\dot{w} = \lambda I w$ has an exponential dichotomy on $\mathbb{R}^{+}$ with projector $P = I$ and its spectrum is $\Sigma_{ED}^{+} (\lambda I) = \{\lambda\}\subset (-\infty,0).$

We will prove that (\ref{lineal-reducido}) is uniformly asymptotically stable and $\Sigma_{ED}^{+}(C)=\{\lambda\}$. Firstly as $||C_{0}(t)|| \to 0$ as $t \to \infty,$ we use Proposition \ref{RWiggins} to obtain that (\ref{lineal-reducido}) has an exponential dichotomy on $[0,+\infty)$ with projector $P = I.$

It is remains to prove that $\Sigma_{ED}(C) = \{\lambda\}.$ Indeed we must study the following differential system for any $\gamma \in \mathbb{R}:$

\begin{equation}
\label{system}
\dot{x} =  \left(
                  \begin{array}{cc}
                    \lambda - \gamma & 0  \\
                     0 & \lambda - \gamma  \\
                  \end{array}
                \right)+  \left(
                  \begin{array}{cc}
                    - A(t)B(t)G(t) & -B(t)^2 G(t) \\
                    A(t)^2 G(t) &   A(t)B(t)G(t)\\
                  \end{array}
                \right)x.
\end{equation}

Notice that the system $\dot{x} = (\lambda - \gamma)x$ has exponential dichotomy on $\mathbb{R}^+$ with $P = I$ (resp. $P = 0$) when $\lambda < \gamma$ (resp. $\lambda > \gamma$). Thus, by using again the
roughness of exponential dichotomy we can deduce that the system (\ref{system}) has exponential dichotomy on $\mathbb{R}^+$ with $P = I$ (resp. $P = 0$) when $\lambda < \gamma$ (resp. $\lambda > \gamma$). Thus,
$\Sigma_{ED}^{+}(C) = \{\lambda\}$ or $\Sigma_{ED}(C) = \emptyset.$

We will verify that $\Sigma_{ED}^{+}(C)=\{\lambda\}$, otherwise $\Sigma_{ED}(C) = \emptyset$ which is equivalent to $\rho(C):=\mathbb{R}\setminus \Sigma_{ED}^{+}(C) = \mathbb{R}.$ This last statement combined with the ambivalence of the projector of exponential dichotomy of (\ref{system}) depending if $\lambda > \gamma$ (\emph{i.e}, $P=0$) or $\lambda < \gamma$ (\emph{i.e}, $P=I$) is a contradiction with the invariance of the rank's dimension of the projector $P_{\lambda-\gamma}$ in the connected components of the $\rho(C)$ described in the statement (iii) of Remark \ref{observacion} (for more details see \cite[Lemma 5.11]{Kloeden}. Therefore we have proved that $\Sigma_{ED}(C) = \{\lambda\}.$

Finally, as the system $\dot{x} = C(t)x$  has exponential dichotomy on $\mathbb{R^+}$ with projector $P = I$ (\textit{i.e} the origin is globally uniformly asymptotically stable) and the original
autonomous has solution $ z(t) = z_0 \, e^{\lambda t} ,$ we can deduce that the origin is a global attractor of the differential system  $ \dot{x} = F(x) .$

\end{proof}

\begin{remark}
\label{observacion2}
Inside the proof, we provide an example of the invariance of the exponential dichotomy spectrum by additive vanishing perturbations in the continuous framework: $\Sigma_{ED}^{+}(A)=\Sigma_{ED}^{+}(A+\tilde{B})$ for any matrix $\tilde{B}(t)\to 0$ when $t\to +\infty$. This property can be easily proved by using Proposition \ref{RWiggins}. In addition, the above result is inspired in the work of P\"otzsche and Russ \cite[Prop.8]{Russ}, which studied the invariance of the spectrum in the discrete case. To the best of our knowledge, there are no results in the continuous case.
\end{remark}


\begin{thebibliography}{99}



\bibitem{Batelli}
F. Batelli, K.J. Palmer.
Criteria for exponential dichotomy for triangular
systems. J. Math. Anal. Appl.
428 (2015) 525--543.

\bibitem{CG} \'A. Casta\~neda, V. Gu\'i\~nez. Some results about global asymptotic stability.
Qual. Theory Dyn. Syst. 12 (2013) 427--441.


\bibitem{Cheban} D. Cheban.
Markus-Yamabe conjecture for non-autonomous dynamical systems. Nonlinear Anal. 95 (2014) 202–-218.

\bibitem{CEGMH}
A. Cima, A. van den Essen, A. Gasull, E. Hubbers, F. Ma\~nosas.
A
polynomial counterexample to the Markus-Yamabe Conjecture. Adv. Math. 131 (1997), 453–-457.

\bibitem{Coppel} W.A. Coppel.
Dichotomies in Stability Theory. Lecture Notes in Mathematics 629, Springer--Verlag, Berlin. 1978.

\bibitem{Dieci} L. Dieci, E. Van Vleck.
Lyapunov and Sacker-–Sell Spectral Intervals. J. Diff. Eq. Dyn. Syst. 19 (2007) 265--293.

\bibitem{Doan} T.S. Doan, K.J. Palmer, M. Rasmussen.
The Bohl Spectrum for Linear Nonautonomous Differential Equations.
J. Diff. Eq. Dyn. Syst. 29 (2017) 1459--1485.


\bibitem{F}
R. Fe{\ss}ler.
A Proof of the two dimensional Markus--Yamabe
stability conjecture. Ann. Polon. Math. 62 (1995) 45--75.

\bibitem{Glu}
 A. A. Glutsyuk.
The complete solution of the
Jacobian problem for vector fields on the plane. Comm. Moscow Math.
Soc., Russian Math. Surveys 49 (1994) 185–-186.

\bibitem{Gu}
C. Guti\'errez.
A solution to the bidimensional global asymptotic
stability conjecture. Ann. Inst. H. Poincar\'e Anal. Non Lin\'eaire
 12 (1995) 627–-671.

\bibitem{K} H.K. Khalil Nonlinear Systems Printice Hall. 2000.

\bibitem{Kloeden} Kloeden P.E., Rasmussen M.
Nonautonomous Dynamical Systems. American Mathematical Society. 2011.

\bibitem{MY}
L. Markus,  H. Yamabe. Global Stability Criteria for
Differential Systems. Osaka Math. J. 12 (1960) 305–-317.

\bibitem{MO}
G.H. Meisters, C. Olech.
A Jacobian condition for injectivity of differentiable plane maps. Ann. Polon. Math.   (1990) 249–254.

\bibitem{Naulin}
R. Naulin, M. Pinto. Admissible perturbations of
exponential dichotomy roughness. Nonlinear Anal. 31 (1998) 559–-571.


\bibitem{O}
C. Olech. On the global stability of an autonomous system on the
plane. Contributions to Diff. Eq. 1 (1963) 389--400.

\bibitem{Palmer84}
K.J. Palmer. Exponential dichotomies and transversal homoclinic points. J. Differential Equations 55 (1984) 225–256.



\bibitem{SS} Sacker R.J., Sell G.R.
A spectral theory for linear differential systems,
J. Differential Equations. 27  (1978) 320--358.

\bibitem{Wiggins} N. Ju, S. Wiggins.
On Roughness of Exponential Dichotomy
J. Math. Anal. Appl. 262 (2001) 39--49.







\bibitem{Russ}
C.  P\"otzche, E. Russ. Continuity and Invariance of the Sacker-–Sell Spectrum. J. Dyn. Differential Equations. 28 (2016) 533–566.





\end{thebibliography}
\end{document}